\begin{document}

\swapnumbers
\newtheorem{theorem}{Theorem}[section]
\newtheorem{proposition}[theorem]{Proposition}
\newtheorem{corollary}[theorem]{Corollary}
\newtheorem{lemma}[theorem]{Lemma}
\newtheorem*{theorem*}{Theorem}
\newtheorem*{proposition*}{Proposition}
\newtheorem*{corollary*}{Corollary}
\newtheorem*{lemma*}{Lemma}\theoremstyle{definition}
\newtheorem{definition}[theorem]{Definition}
\newtheorem{punto}[theorem]{}
\newtheorem{example}[theorem]{Example}
\newtheorem{remark}[theorem]{Remark}
\newtheorem{remarks}[theorem]{Remarks}
\newtheorem*{definition*}{Definition}
\newtheorem*{remark*}{Remark}

\newcommand{\cat}[1]{\mathcal{#1}}
\newcommand{\unidad}[1]{\eta_{#1}}
\newcommand{\counidad}[1]{\delta_{#1}}
\newcommand{\stat}[1]{\mathrm{Stat}(#1)}
\newcommand{\adstat}[1]{\mathrm{Adst}(#1)}
\newcommand{\coker}[1]{\mathrm{coker}(#1)}
\newcommand{\rcomod}[1]{\mathcal{M}^{#1}}
\newcommand{\lcomod}[1]{{}^{#1}\mathcal{M}}
\newcommand{\rmod}[1]{\mathcal{M}_{#1}}
\newcommand{\lmod}[1]{{}_{#1}\mathcal{M}}
\newcommand{\coend}[2]{\mathrm{Coend}_{#1}(#2)}
\newcommand{\End}[3]{\mathrm{End}_{#1}^{#2}(#3)}
\renewcommand{\hom}[3]{\mathrm{Hom}_{#1}(#2,#3)}
\newcommand{\Hom}{\mathrm{Hom}}
\newcommand{\cotensor}[1]{\square_{#1}}
\newcommand{\functor}[1]{\mathbf{#1}}
\newcommand{\Mod}{\mbox{{\rm -Mod}}}
\newcommand{\cohom}[3]{\mathrm{h}_{#1}(#2,#3)}
\newcommand{\HOM}[3]{\mathrm{HOM}_{#1}(#2,#3)}
\def\|{\, | \,}
\def\C{\mathbb{C}\,} 

\newcommand{\END}{\mathrm{End}\, }

%

\title[Similarity, codepth two and QF bimodules]{Similarity, codepth two bicomodules and
QF bimodules}
\author{F.~Casta\~no Iglesias}
\address{Departamento de Estad\'{i}stica y Matem\'atica Aplicada \\ Universidad de
Almer\'{\i}a, Spain}
\email{fci@ual.es }
  \thanks{Research partially
supported by Spanish Project (MTM2005--03227) from MCT}
\author{Lars Kadison}
\address{Department of Mathematics \\
University of Pennsylvania \\
Philadelphia, PA 19104 U.S.A.}
 \email{lkadison@math.upenn.edu}
\date{}
\maketitle
\begin{abstract}
For any $k$-coalgebra $C$ it is shown that  similar quasi-finite
$C$-comodules  have strongly equivalent coendomorphism coalgebras;
 (the converse is in general not true). As an application we
give a general result about  codepth two coalgebra homomorphisms.
Also a notion of codepth two bicomodule is introduced.  The
last  section applies similarity to an endomorphism ring theorem for  quasi-Frobenius (QF) bimodules and then to finite depth ring extensions.
For QF extensions, we establish that left and right depth two are equivalent notions as well as a converse
endomorphism theorem, and characterize depth three in terms of separability
and depth two. 


\end{abstract}


\section*{Introduction}
For a ring $R$, two right $R$-modules $M$ and $N$ are \emph{similar}
\cite{A-F} \,($H$-equivalent in sense of Hirata) if $M$ is a direct summand of $N^{(n)}$ and $N$ is a
direct summand of $M^{(m)}$ for some $m,n\in \mathbb{N}$. For example, $M$ is
similar to $R_R$ if and only if $M$ is a \emph{progenerator}, i.e.,
$M$ is a  finitely generated projective generator. Hirata showed in \cite{hirata} that similar $R$-modules $M \sim N$ have Morita
equivalent endomorphism rings $E_M$ and $E_N$;
whence isomorphic centers, $\END {}_{E_M}M_R
\cong \END {}_{E_N}N_R$. (The converse is
not true in general.)
By taking $R$
to be the enveloping ring of two rings, we extend this to a notion of similar bimodules.

For a $k$-coalgebra $C$, the notion of \emph{ingenerator}
comodule is introduced by Lin in \cite{Lin} to characterize
\emph{strong equivalences} between comodule categories. A right
$C$-comodule $M$ is said to be an ingenerator if there are $m,n \in
\mathbb{N}$ and $P,Q\in \mathcal{M}^C$ such that $M\oplus P \cong
C^{(m)}$ and $C\oplus Q \cong M^{(n)}$. More generally, two right
$C$-comodules $M_C$ and $N_C$ are called \emph{similar} if there are
$m,n \in \mathbb{N}$ and $P,Q \in \mathcal{M}^C$ such that $M\oplus
P \cong N^{(m)}$ and $N\oplus Q \cong M^{(n)}$.
Our first result  in this note is to extend the result of  Hirata to coalgebras.
More precisely, if two quasi-finite $C$-comodules are similar, then their
coendomorphism coalgebras are strongly equivalent in sense of Lin
\cite{Lin}.
Using the notion of codepth two coalgebra homomorphism, introduced by the second author in \cite{kadison-coD2}, it is proved in Section 2, as an application,
  that any codepth two coalgebra homomorphisms $\varphi:C\rightarrow D$, such that $C_D$ and $(C\cotensor{D}C)_D$
   are quasi-finite comodules, have strongly equivalent coendomorphism coalgebras
    $e_{-D}(C)$ and $e_{-D}(C\cotensor{D}C)$.
    Also in Section 2 a notion of  \emph{codepth two  bicomodule} is introduced, where it is noted that
     $\varphi:C\rightarrow D$ is a codepth two coalgebra homomorphism if and only if the  $_DC_C$ is a codepth two bicomodule.

In the last  section, using similar bimodules, we establish for QF bimodules  an endomorphism
ring theorem.  In case of a QF ring extension, the tensor-square
is similar with its endomorphism rings.  This is applied
to depth two and depth three QF extensions, showing that several results in \cite{kadison-fd} generalize from Frobenius to QF extensions.


\section{Definitions and some results} Throughout this paper $k$ is a field and $\mathcal{M}_k$
stands for the category of $k$-vector spaces. A basic reference for
theory of coalgebras is, for example,  \cite{Sw}. A coalgebra over
$k$ is a $k$-space $C$ together with two $k$-linear maps $\Delta :C
\rightarrow C \otimes C$ (the unadorned tensor product is understood
to be over $k$) and $\epsilon : C \rightarrow k$ such that $(1_C
\otimes \Delta) \Delta = (\Delta \otimes 1_C) \Delta$ and $(1_C \otimes
\epsilon) \Delta = (\epsilon \otimes 1_C) \Delta = 1_C$.

A right $C$-comodule is a $k$-space $M$ with a $k$-map $\rho_{M}:M
\rightarrow M \otimes C$ such that $(\rho_{M} \otimes 1_C)\rho_{M} =
(1_M \otimes \Delta)\rho_{M}$ and $(1_C \otimes \epsilon)\rho_{M} = 1_M$.
 If $M$ and $N$ are $C$-comodules, a comodule map from $M$ to $N$ is a
$k$-map $f:M \rightarrow N$ such that $(f \otimes 1)\rho_{M} =
\rho_{N}f$. The $k$-space of all comodules maps from $M$ to $N$ is
denoted by $Com_{C}(M,N)$ and $\mathcal{M}^C$ denotes the category
of right $C$-comodules. In the same way we can construct the
category of left $C$-comodules $^C\mathcal{M}$.

It is well known that $\mathcal{M}^C$ is an abelian category. In
fact, $\mathcal{M}^C$ is a locally finite Grothendieck category
(generated by finite dimensional comodules). The fundamental
properties of the categories of comodules can be found in several
places, see e.g. \cite{T,Do}. Let $C$ be an arbitrary coalgebra, $M$
a right $C$-comodule and $N$ be a left $C$-comodule, the \emph{
cotensor product} $M\cotensor{C}N$ is the kernel of the $k$-map
$\rho_M\otimes 1-1\otimes \rho_N:M\otimes N\rightarrow M\otimes
C\otimes N.$ Following \cite{Do}, the cotensor product is a left
exact functor $\mathcal{M}^C\times ^C\mathcal{M}\rightarrow
\cat{M}_k$. Moreover, the mapping $m\otimes c\mapsto \epsilon (c)m$
and $c\otimes n \mapsto \epsilon (c)n$ yield a natural isomorphism
$M\Box_CC\cong M$ and $C\Box_CN\cong N.$ If $C$ and $D$  are two
coalgebras, $M$ is a $(C,D)$-bicomodule if $(1_C\otimes
\rho^+)\rho^- = (\rho^-\otimes 1_D)\rho ^+$ where $\rho^-:
M\rightarrow C\otimes M$ and $\rho^+ : M\rightarrow M\otimes D$ are
the structure maps of $M$. Moreover, if $X$ is a right $C$-comodule,
then the map $1_X\otimes \rho^+ : X\otimes M\rightarrow X\otimes
M\otimes D$ define over $X\otimes M$ a structure of right
$D$-comodule. In this case, $X\cotensor{C} M$ is a $D$-subcomodule
of $X\otimes M$. This defines a left exact functor
$-\cotensor{C}M:\mathcal{M}^C\rightarrow \mathcal{M}^D$ that
preserves direct sums (see \cite{T}).

Recall from \cite{T} that a right $C$-comodule $M$ is called
\emph{quasi-finite} if $Com_{C}(Y,M)$ is finite dimensional for
every finite dimensional comodule $Y\in \mathcal{M}^C$. This is
equivalent to the existence of a left adjoint $\cohom{-C}{M}{-}$,
called \emph{co-hom} functor, to $-\otimes M$. If $M$ is a
$(D,C)$-bicomodule, the functor $\cohom{-C}{M}{-}:
\mathcal{M}^C\rightarrow \mathcal{M}^D$ becomes a left adjoint to
the \emph{cotensor product} functor $-\cotensor{D}M
:\mathcal{M}^D\rightarrow \mathcal{M}^C$. If we assume that $M_C$ is
a quasi-finite comodule, then $e_{-C}(M) = \cohom{-C}{M}{M}$ is a
coalgebra, called co-endomorphism coalgebra of $M$. Furthermore, $M$
is a $(e_{-C}(M),C)$-bicomodule via $\theta_M : M\rightarrow
e_{-C}(M)\otimes M$, where $\theta: 1_{\mathcal{M}^C}\rightarrow
\cohom{-C}{M}{-}\cotensor{D}M$ denotes the unit of the adjunction.
\bigskip

Consider now  two quasi-finite right $C$-comodules $M_C$ and $N_C$.
If we denote by $D_M = e_{-C}(M)$ and $D_N = e_{-C}(N)$ their
coendomorphism coalgebras, then we can consider the diagram
\begin{equation}\label{composition}
\xymatrix@*+<14pt>{\mathcal{M}^{D_M}
 \ar[rr]^{-\cotensor{D_M}M}
   & & \mathcal{M}^C \ar@<1ex>[ll]^{\cohom{-C}{M}{-}}_{} \
 \ar[rr]^{\cohom{-C}{N}{-}}
   & &\mathcal{M}^{D_N} \ar@<1ex>[ll]^{-\cotensor{D_N}N}_{} }
\end{equation}
The composition of functors yield a pair of functors between the
comodules categories over the coendomorphism coalgebras:

$$\functor{F}= \cohom{-C}{N}{-\cotensor{D_M}M}:
\mathcal{M}^{D_M}\rightleftarrows \mathcal{M}^{D_N} :
\cohom{-C}{M}{-\cotensor{D_N}N} = \functor{G},$$ where
$\functor{F}(D_M)\cong\cohom{-C}{N}{M}$ and
$\functor{G}(D_N)\cong\cohom{-C}{M}{N}.$
\bigskip

Following \cite{T}, a \emph{Morita-Takeuchi context} $\Omega =
\left( C, D ; _CM_D , _DN_C ; f, g\right)$ consists of coalgebras
$C$ and $D$, bicomodules $_CM_D$ and $_DN_C$ and bicolinear maps
$f:C\rightarrow M\cotensor{D}N$ and $g:D\rightarrow N\cotensor{C}M$
satisfying the following commutative diagrams:
\[
\xymatrix{ M \ar^{\simeq}[rr] \ar_{\simeq}[d] & & M\cotensor{D}D
\ar^{I\cotensor{}g}[d] \\
C\cotensor{C}M \ar^{f\cotensor{}I}[rr] & &
M\cotensor{D}N\cotensor{C}M } \qquad \xymatrix{ N \ar^{\simeq}[rr]
\ar_{\simeq}[d] & & N\cotensor{C}C
\ar^{I\cotensor{}f}[d] \\
D\cotensor{D}N \ar^{g\cotensor{}I}[rr] & &
N\cotensor{C}M\cotensor{D}N }
\]
The context is said to be \emph{strict} if $f$ and $g$ are
bicolinear isomorphisms. In this case, the categories
$\mathcal{M}^{C}$ and $\mathcal{M}^{D}$ are \emph{equivalent} and we
say that $C$ is \emph{Morita-Takeuchi equivalent} to $D$. The strict condition of the
context is equivalent to say that the bicomodules $M$ and $N$ are
injective cogenerators  finitely cogenerated \cite[Theorem 2.5]{T}.
\bigskip

For any quasi-finite right $C$-comodules $M_C$ and $N_C$, it was
proved in \cite{Li-Wang} that
\begin{equation}\label{context}
\Gamma^C_{M,N} = \left(D_M, D_N ;\cohom{-C}{N}{M}, \cohom{-C}{M}{N};
f, g \right) \end{equation}
 is a Morita-Takeuchi
context with bicolinear maps $$f: D_M\rightarrow
\cohom{-C}{N}{M}\cotensor{D_N}\cohom{-C}{M}{N}$$ and $$g:
D_N\rightarrow \cohom{-C}{M}{N}\cotensor{D_M}\cohom{-C}{N}{M}$$
defined, respectively, by
$$(f\otimes 1)\theta_M = (1\otimes \theta_N)\overline{\theta}_M$$
$$(g\otimes 1)\overline{\theta}_N = (1\otimes\overline{\theta}_M)\theta_N$$
where $\theta$ and $\overline{\theta}$  are the units of the
adjoint pair $$(\cohom{-C}{M}{-}, -\cotensor{D_M}M)$$ and
$$(\cohom{-C}{N}{-}, -\cotensor{D_N}N),$$ respectively.

\section{Similar comodules and a strong equivalence}
In this section we consider the definition of similar comodules
and we prove that similar quasi-finite comodules  have strongly equivalent
coendomorphism coalgebras.
\begin{definition} Let $C$ be a $k$-coalgebra. Two right $C$-comodules $M$ and $N$ are
\emph{similar}, abbreviated $M\sim N$, if there are $m,n\in
\mathbb{N}$ and $C$-comodules $P$ and $Q$ such that $M\oplus P \cong
N^{(m)}$ and $N\oplus Q \cong M^{(n)}$.
\end{definition}
It is easy see that ``$\sim $'' defines an equivalence relation on
the class $\mathcal{M}^C$ of right $C$-comodules.  Notice  that if
$M_C$ is quasi-finite, the functor $\cohom{-C}{M}{-}$  preserves
 comodules similar to $M$, as does the functor $-\cotensor{D_M}M$, where
$D_M=e_{-C}(M)$ is the coendomorphism coalgebra of $M$. This
comes from the fact that the co-hom and cotensor functors preserve
direct sums. One obtains from \cite{Lin} that a right $C$-comodule
$M$ is an ingenerator of $\mathcal{M}^C$ if and only if $M$ is
similar  to $C_C$ as comodules. By transitivity of ``$\sim $'', we note that if there are several ingenerators in $\mathcal{M}^C$,
then all are similar. Therefore we can state a first result concerning similarity properties of comodules.
\begin{lemma}\label{ingenerator}
Assume that $M$ and $N$ are quasi-finite right $C$-comodules. If $M_C \sim N_C$, then
$\cohom{-C}{M}{N}$ and $\cohom{-C}{N}{M}$ are ingenerators in
$\mathcal{M}^{D_M}$ and $\mathcal{M}^{D_N}$, respectively.
\end{lemma}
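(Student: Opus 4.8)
The plan is to transport the similarity $M_C \sim N_C$ through the co-hom functors and then recognize the resulting objects as ingenerators via the characterization recalled just before the statement: a right $E$-comodule is an ingenerator of $\mathcal{M}^E$ if and only if it is similar to $E_E$. So it suffices to show $\cohom{-C}{M}{N} \sim D_N$ in $\mathcal{M}^{D_N}$ and, symmetrically, $\cohom{-C}{N}{M} \sim D_M$ in $\mathcal{M}^{D_M}$; by symmetry of the hypothesis I will only treat the first.

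First I would unwind the similarity: fix $m,n\in\mathbb{N}$ and $C$-comodules $P,Q$ with $M\oplus P\cong N^{(m)}$ and $N\oplus Q\cong M^{(n)}$. Next I apply the functor $\cohom{-C}{M}{-}:\mathcal{M}^C\rightarrow\mathcal{M}^{D_M}$; since it is a left adjoint it preserves direct sums, so from $N\oplus Q\cong M^{(n)}$ we get
\[
\cohom{-C}{M}{N}\oplus\cohom{-C}{M}{Q}\;\cong\;\cohom{-C}{M}{M}^{(n)}\;=\;D_M^{(n)},
\]
using $\cohom{-C}{M}{M}=e_{-C}(M)=D_M$. Hmm—but I want an ingenerator in $\mathcal{M}^{D_N}$, not $\mathcal{M}^{D_M}$, so instead I should run the argument on the $D_N$ side. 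Apply $\cohom{-C}{N}{-}:\mathcal{M}^C\rightarrow\mathcal{M}^{D_N}$, which again preserves direct sums. From $M\oplus P\cong N^{(m)}$ we obtain $\cohom{-C}{N}{M}\oplus\cohom{-C}{N}{P}\cong D_N^{(m)}$, so $\cohom{-C}{N}{M}$ is a direct summand of a finite direct sum of copies of $D_N$. For the reverse direction, apply $\cohom{-C}{N}{-}$ to $N\oplus Q\cong M^{(n)}$ to get $D_N\oplus\cohom{-C}{N}{Q}\cong\cohom{-C}{N}{M}^{(n)}$, so $D_N$ is a direct summand of a finite direct sum of copies of $\cohom{-C}{N}{M}$. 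These two facts together say exactly that $\cohom{-C}{N}{M}\sim D_N$ in $\mathcal{M}^{D_N}$, hence $\cohom{-C}{N}{M}$ is an ingenerator of $\mathcal{M}^{D_N}$; the symmetric computation with the roles of $M$ and $N$ interchanged gives that $\cohom{-C}{M}{N}$ is an ingenerator of $\mathcal{M}^{D_M}$.

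The only point that needs a word of care—and the one I would expect to be the sole genuine obstacle—is that the intermediate objects $\cohom{-C}{N}{P}$ and $\cohom{-C}{N}{Q}$ (and their $M$-analogues) actually live in the comodule category $\mathcal{M}^{D_N}$, i.e. that one may apply $\cohom{-C}{N}{-}$ to the arbitrary $C$-comodules $P$ and $Q$. But $N_C$ is quasi-finite by hypothesis, so $\cohom{-C}{N}{-}$ exists as a functor $\mathcal{M}^C\rightarrow\mathcal{M}^{D_N}$ on all of $\mathcal{M}^C$ (it is the left adjoint of $-\cotensor{D_N}N$), and this is exactly the setting recalled in Section 1; no finiteness of $P$ or $Q$ is needed. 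Everything else is the formal fact that additive functors respect finite biproducts, so the proof is short once the bookkeeping of which category each co-hom lands in is set straight.
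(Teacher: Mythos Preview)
Your argument is correct and is exactly the paper's approach: apply the additive functor $\cohom{-C}{N}{-}$ (resp.\ $\cohom{-C}{M}{-}$) to the similarity $M\sim N$ to obtain $\cohom{-C}{N}{M}\sim D_N$ in $\mathcal{M}^{D_N}$ (resp.\ $\cohom{-C}{M}{N}\sim D_M$ in $\mathcal{M}^{D_M}$), and then invoke the characterization of ingenerators. The paper simply phrases this as ``$\cohom{-C}{M}{-}$ preserves comodules similar to $M$'' and concludes in one line; your version spells out the direct-sum bookkeeping, but the content is identical.
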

\begin{proof} Suppose that $M_C \sim N_C$. Since the functor $\cohom{-C}{M}{-}$ (respectively,  $\cohom{-C}{N}{-}$)  preserve similar comodules to $M$ (respectively, to $N$), we deduce that  $D_M = \cohom{-C}{M}{M}\sim \cohom{-C}{M}{N}$ and $D_N = \cohom{-C}{N}{N}\sim \cohom{-C}{N}{M},$  hence   we obtain  the lemma.
\end{proof}
\bigskip

%

Let $C$ and $D$ be two $k$-coalgebras. Recall from \cite{Lin} that
$C$ is \emph{strongly equivalent} to $D$ if the category
$\mathcal{M}^C$ is equivalent to the category $\mathcal{M}^D$ via
inverse equivalences
$$\functor{F}: \mathcal{M}^C \rightleftarrows\mathcal{M}^D:
\functor{G},$$ such that $\functor{F}(C)$ is an ingenerator of
$\mathcal{M}^D$ and  $\functor{G}(D)$ is an ingenerator of
$\mathcal{M}^C$.
\bigskip

The  theorem below answers  our first aim in the affirmative.
\begin{theorem}\label{main} Let $C$ be a $k$-coalgebra and $M_C$ and $N_C$ be quasi-finite
$C$-comodules. If  $M_C$ is similar to $N_C$, then  $D_M$ is
strongly equivalent to $D_N$.
\end{theorem}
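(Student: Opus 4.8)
The plan is to exhibit the functors $\functor{F}$ and $\functor{G}$ from the Morita--Takeuchi context $\Gamma^C_{M,N}$ of (\ref{context}) as the desired equivalence, and then to check the two extra conditions required for a \emph{strong} equivalence (that $\functor{F}(D_M)$ and $\functor{G}(D_N)$ are ingenerators). First I would recall that by \cite{Li-Wang} we already have the context $\Gamma^C_{M,N} = \left(D_M, D_N; \cohom{-C}{N}{M}, \cohom{-C}{M}{N}; f, g\right)$ with $\functor{F}(D_M)\cong\cohom{-C}{N}{M}$ and $\functor{G}(D_N)\cong\cohom{-C}{M}{N}$, where $\functor{F}= \cohom{-C}{N}{-\cotensor{D_M}M}$ and $\functor{G}= \cohom{-C}{M}{-\cotensor{D_N}N}$. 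So the only two things to establish are: (i) the context is strict, i.e. $f$ and $g$ are isomorphisms, so that $\functor{F}$ and $\functor{G}$ are mutually inverse equivalences; and (ii) $\functor{F}(D_M)$ is an ingenerator of $\mathcal{M}^{D_N}$ and $\functor{G}(D_N)$ is an ingenerator of $\mathcal{M}^{D_M}$.

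Step (ii) is essentially free given Lemma~\ref{ingenerator}: since $M_C\sim N_C$, that lemma tells us $\cohom{-C}{M}{N}$ is an ingenerator of $\mathcal{M}^{D_M}$ and $\cohom{-C}{N}{M}$ is an ingenerator of $\mathcal{M}^{D_N}$, and these are exactly $\functor{G}(D_N)$ and $\functor{F}(D_M)$ respectively. For step (i), the natural route is the criterion quoted after the definition of Morita--Takeuchi context (\cite[Theorem 2.5]{T}): the context is strict iff the bicomodules $\cohom{-C}{N}{M}$ and $\cohom{-C}{M}{N}$ are injective cogenerators which are finitely cogenerated. Rather than verify that directly, I would instead argue that $\functor{F}$ and $\functor{G}$ are inverse equivalences by a more hands-on route: since $M\oplus P\cong N^{(m)}$ and $N\oplus Q\cong M^{(n)}$, the cotensor functor $-\cotensor{D_M}M$ and the co-hom functor $\cohom{-C}{N}{-}$ commute with finite direct sums, so one can track how the composite $\functor{G}\functor{F}$ acts: $\functor{G}\functor{F}(D_M) = \cohom{-C}{M}{\cohom{-C}{N}{M}\cotensor{D_N}N}$, and using the counit/unit triangle identities of the two adjunctions $(\cohom{-C}{M}{-},-\cotensor{D_M}M)$ and $(\cohom{-C}{N}{-},-\cotensor{D_N}N)$ together with the compatibility maps $\theta,\overline\theta$ defining $f,g$, one shows the canonical natural transformation $\functor{G}\functor{F}\Rightarrow 1_{\mathcal{M}^{D_M}}$ induced by $f$ is an isomorphism on $D_M$, hence (both sides being colimit-preserving, and $D_M$ a generator) an isomorphism everywhere; symmetrically for $\functor{F}\functor{G}$ via $g$.

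The main obstacle is step (i): showing $f$ and $g$ are isomorphisms. The subtlety is that similarity of $M$ and $N$ as $C$-comodules does not immediately transfer to a statement about the bicomodules $\cohom{-C}{N}{M}$ and $\cohom{-C}{M}{N}$ over the coendomorphism coalgebras, because one must control the left $D_N$- (resp. $D_M$-) comodule structure, not just the right $C$-structure. The way I would handle this is to use the observation (made in the paragraph following the definition of similarity) that $\cohom{-C}{M}{-}$ carries $M\sim N$ to $D_M\sim\cohom{-C}{M}{N}$ and likewise $\cohom{-C}{N}{-}$ carries this to $D_N\sim\cohom{-C}{N}{M}$, these being similarities of \emph{bicomodules}, hence in particular of right $D_M$- and right $D_N$-comodules; then $\cohom{-C}{N}{M}$ is similar to $D_N$ in $\mathcal{M}^{D_N}$ and $\cohom{-C}{M}{N}$ is similar to $D_M$ in $\mathcal{M}^{D_M}$, i.e. they are ingenerators, which by \cite{Lin} is exactly the condition guaranteeing that the associated Morita--Takeuchi context is strict. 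Combining this with the fact that a strict context gives inverse equivalences, and that we have just identified $\functor{F}(D_M)$ and $\functor{G}(D_N)$ as ingenerators, yields that $D_M$ is strongly equivalent to $D_N$.
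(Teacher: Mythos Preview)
Your proposal is correct and, after the exploratory detour, lands on essentially the same argument as the paper: apply Lemma~\ref{ingenerator} to get that $\cohom{-C}{M}{N}$ and $\cohom{-C}{N}{M}$ are ingenerators, conclude that the context $\Gamma^C_{M,N}$ is strict, and read off the strong equivalence. The only cosmetic difference is that the paper invokes \cite[Theorem~2.5]{T} (ingenerator $\Rightarrow$ injective, finitely cogenerated cogenerator $\Rightarrow$ strict context) rather than \cite{Lin} for the strictness step; your ``hands-on route'' is unnecessary and can be dropped.
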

\begin{proof} Assume $M_C \sim N_C$. It follows from Lemma \ref{ingenerator} that $\cohom{-C}{M}{N}$ and $\cohom{-C}{N}{M}$ are ingenerators of
 $\mathcal{M}^{D_M}$ and $\mathcal{M}^{D_N}$, respectively.
By \cite[Theorem 2.5]{T}, the context $\Gamma^C_{M,N}$ in (2) is strict. So,  $D_M$ is Morita-Takeuchi equivalent to $D_N$. From diagram (\ref{composition}) the equivalence of categories is induced by the
composition functors $\functor{F}$ and $\functor{G}$. Moreover
$\functor{F}(D_M)\sim D_N$ and $\functor{G}(D_N)\sim D_M$, which
shows that the equivalence $\functor{F}:
\mathcal{M}^{D_M}\rightleftarrows \mathcal{M}^{D_N}: \functor{G}$ is
strong  in sense of \cite{Lin}. Hence the coendomorphism
coalgebras  are strongly equivalent. \end{proof}

\begin{remark} The converse fails in general since if $k$ is an
algebraically closed field  and  we consider two non isomorphic
simple comodules $S$ and $S'$ of $\mathcal{M}^{C}$, then
$\cohom{-C}{S}{S}= \mbox{Com}_C(S,S)^*= k^* \cong k$ and
$\cohom{-C}{S'}{S'}\cong k$. Thus $D_S$ and $D_{S'}$ are strongly
equivalent but $S$ and $S'$ cannot be similar.
\end{remark}
\begin{punto}
{\bf Application to codepth two coalgebra homomorphisms.} Let
$\varphi:C\rightarrow D$ be a homomorphism of coalgebras over a
field. Then $C$ has an induced $(D,D)$-bicomodule structure and any
$C$-comodule becomes a $D$-comodule via the corestriction functor
$(-)_{\varphi}:\rcomod{C}\rightarrow \rcomod{D}$ (see \cite{T}). A well-known result in the theory of coalgebras is that the corestriction functor
has the coinduction functor $-\cotensor{D}C:
\rcomod{D}\rightarrow \rcomod{C}$ as right adjoint. Thus we have the adjoint couple
of functors

\begin{equation}
\xymatrix@*+<14pt>{\mathcal{M}^{D}
 \ar[rr]^{-\cotensor{D}C}
   & & \mathcal{M}^C \ar@<1ex>[ll]^{(-)_{\varphi}}_{}
  }
\end{equation}
 A coalgebra homomorphism $\varphi:C\rightarrow D$ is called \emph{left codepth two} \cite[Definition 6.1]{kadison-coD2} if the cotensor product $C\cotensor{D}C$ is isomorphic to a direct summand of a finite direct sum of $C$ as  $(D,C)$-bicomodules. Since $C$ is in general isomorphic to  a direct summand of $C\cotensor{D}C$ as $(D,C)$-bicomodules,  we note that $\varphi$ is a left codepth two coalgebra homomorphism if $C\cotensor{D}C$ and $C$ are similar as $(D,C)$-bicomodules.
Right codepth two coalgebra homomorphisms are similarly defined.
Applying Theorem \ref{main} to codepth two coalgebra homomorphism, we obtain:
\begin{theorem*}  Let $\varphi:C\rightarrow D$ be a codepth two coalgebra homomorphism. If $C_D$ and $( C\cotensor{D}C)_D$ are quasi-finite right $D$-comodules, then the coendomorphism coalgebras $e_{-D}(C)$ and $e_{-D}(C\cotensor{D}C)$ are strongly equivalent.
\end{theorem*}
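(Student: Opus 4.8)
The plan is to deduce the statement from Theorem~\ref{main} once we know that codepth two forces $C_D$ and $(C\cotensor{D}C)_D$ to be similar as right $D$-comodules. So the whole proof amounts to transferring the similarity that \emph{defines} codepth two, which lives at the level of bicomodules, down to the level of plain right $D$-comodules, and then quoting Theorem~\ref{main}.

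First I would unwind the definition. Saying that $\varphi$ is left codepth two means exactly that $C\cotensor{D}C$ and $C$ are similar as $(D,C)$-bicomodules; concretely there are $n\in\mathbb{N}$ and a $(D,C)$-bicomodule $P$ with $C\cotensor{D}C\oplus P\cong C^{(n)}$, and since $C$ is always a direct summand of $C\cotensor{D}C$ as $(D,C)$-bicomodules we also trivially have $C\oplus Q\cong C\cotensor{D}C$ for some $Q$. (If $\varphi$ is right codepth two one argues symmetrically with $(C,D)$-bicomodules; I would record that ``codepth two'' may be read in either sense here, the two cases being mirror images.)

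Next I would push this picture into $\rcomod{D}$ by an additive functor. Forgetting the left $D$-comodule structure gives a functor from $(D,C)$-bicomodules to $\rcomod{C}$, and composing with the corestriction functor $(-)_{\varphi}:\rcomod{C}\rightarrow\rcomod{D}$ gives a functor to $\rcomod{D}$; being a composite of additive functors it preserves finite direct sums and direct summands, and it carries $C$ to $C_D$ and $C\cotensor{D}C$ to $(C\cotensor{D}C)_D$ with precisely the right $D$-comodule structures occurring in the statement. Applying it to the isomorphisms above yields $(C\cotensor{D}C)_D\oplus P'\cong (C_D)^{(n)}$ together with $C_D$ a direct summand of $(C\cotensor{D}C)_D$, i.e. $C_D\sim (C\cotensor{D}C)_D$ in $\rcomod{D}$. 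Since both comodules are quasi-finite by hypothesis, Theorem~\ref{main} applied to the coalgebra $D$ with $M=C_D$ and $N=(C\cotensor{D}C)_D$ shows that $D_M=e_{-D}(C)$ and $D_N=e_{-D}(C\cotensor{D}C)$ are strongly equivalent.

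The only step needing genuine care — rather than being an immediate citation — is the middle one: one must check that forgetting the left structure and corestricting along $\varphi$ really is additive and really identifies the right $D$-comodule structure on $C$ (resp. on $C\cotensor{D}C$) with the corestriction along $\varphi$ of its canonical right $C$-comodule structure. This is routine but not entirely vacuous bookkeeping about which coactions are in play. Everything else is formal: the content of codepth two is a similarity of bicomodules, similarity is preserved by additive functors, and Theorem~\ref{main} converts similarity of quasi-finite comodules into strong equivalence of the coendomorphism coalgebras.
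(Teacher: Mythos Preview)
Your proposal is correct and follows exactly the route the paper intends: the paper presents the theorem with the single line ``Applying Theorem~\ref{main} to codepth two coalgebra homomorphism, we obtain:'' and no further argument, so the whole content is precisely the reduction you spell out---pass from the $(D,C)$-bicomodule similarity $C\sim C\cotensor{D}C$ defining codepth two to similarity of right $D$-comodules via the additive forgetful/corestriction functors, then invoke Theorem~\ref{main}. Your write-up simply makes explicit what the paper leaves to the reader.
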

Consider now a $(D,C)$-bicomodule $M$ with $M_C$ quasi-finite. We have the adjoint pair $$(\cohom{-C}{M}{-}, -\cotensor{D}M).$$
A notion of codepth two bicomodule is the following.
\begin{definition*}
A $(D,C)$-bicomodule $M$ where $M_C$ is quasi-finite is said to be \emph{codepth two bicomodule} if $\cohom{-C}{M}{ e_{-C}(M)\cotensor{D}M}\sim e_{-C}(M)$ as $(C,D)$-bicomodules.
\end{definition*}
Note that $\varphi:C\rightarrow D$ is a codepth two coalgebra homomorphism if and only if $_DC_C$ is a codepth two bicomodule.
\end{punto}
\section{Applications of similarity to QF bimodules and Finite Depth Extensions}
In this section, we recall some known notions and provide several new results concerning quasi-Frobenius bimodules and depth two extensions. For instance, we give the endomorphism ring theorem for QF-bimodules and several characterizations of finite projective depht two extensions in terms of similar bimodules.

\begin{punto}{\bf On QF bimodules.}
For a unital bimodule ${}_BM_A$ over two unital rings $B$ and $A$, we let
$(_BM)^*$ denote its left $B$-dual and $(M_A)^*$ its right $A$-dual.
A unital ring homomorphism $\varphi: B \rightarrow A$ is referred to
as a \textit{ring extension} (of $A$ over $B$, or $A / B$).
We say $(B,A)$-bimodules $M$ \textit{divides} $N$, or $M \| N$,
if and only if $M \oplus P \cong N^{(n)}$ for some complementary $(B,A)$-bimodule $P$
and direct sum power of $N$.  Of course, $N \sim M$ if $M \| N$ and
$N \| M$.  Since the notion of division of modules may be formulated in terms of split exact sequences of the form $0\rightarrow M\rightarrow N^{(n)}\rightarrow X\rightarrow 0$, similarity is clearly an equivalence relation,
which is preserved by functors with the property of preserving finite direct sums.

\noindent
Recall from \cite{BM} that $\varphi$ is a left QF-extension if $_BA$ is finitely generated projective and $A\|(_BA)^*$ as $(A,B)$-bimodules. Similarly, $\varphi$ is a right QF-extension if $A_B$ is finitely generated projective and $A\|(A_B)^*$ as $(B,A)$-bimodules. We easily conclude that $\varphi$ is a QF-extension (left and right extension) if and only if $_BA$ is finitely generated projective and  $A\sim (_BA)^*$ as $(A,B)$-bimodules.
\bigskip

\noindent
 Recall also from \cite{CCN} that a $(B,A)$-bimodule $M$ is called  \emph{quasi-Frobenius bimodule}, or QF bimodule, if both $_BM$ and $M_A$ are finitely generated projective and $(_BM)^*\sim \, (M_A)^*$ as
$(A,B)$-bimodules. It is easy to see
that $\varphi$ is a QF extension if and only if the
natural bimodule $_AA_B$ is a QF-bimodule. In more detail,
$\Leftarrow$ is seen
from $A \sim (_BA)^*$ as $(A,B)$-bimodules,
so $_BA$ is finitely generated projective and $A \| (_BA)^*$, so  $\varphi$ is by definition right QF. It follows that ${}A_B$ is
finitely generated projective. To the last similarity, we apply the functor $(_B-)^*$ to obtain
$(_BA)^* \sim A$ as $(A,B)$-bimodules, so $A \| (A_B)^*$ and by definition  $\varphi$ is  left QF.

\begin{remark*}
In \cite{CCN} the notion of QF Frobenius extension was
extended to functors. In particular, $\varphi$ is a
QF extension if and only if the restriction of scalars functor
$\varphi_{*}$ is a QF functor. More generally, the bimodule $_BM_A$
is a QF bimodule if and only if the functor $M\otimes_A-$ is a QF
functor \cite[Proposition 3.5]{CCN}.
\end{remark*}
The calculus of similar bimodules lends itself to easy proofs of several results, e.g. of  an endomorphism ring theorem for the QF property. For instance, if ${}_BM_A$ is a bimodule and  $\lambda: B
\rightarrow E = \END M_A$ the left regular representation given by
$\lambda_b(m) = bm$, then the endomorphism ring theorem for QF bimodules is stated as follows.
\begin{proposition*}
If ${}_BM_A$ is a QF bimodule, then $B
\stackrel{\lambda}{\longrightarrow} E$ is a QF extension.
\end{proposition*}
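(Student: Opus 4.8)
The plan is to reduce the statement to an application of the similarity calculus developed for bimodules, using the QF bimodule hypothesis on ${}_BM_A$ to manufacture the corresponding QF data for the extension $B \stackrel{\lambda}{\longrightarrow} E$, where $E = \END M_A$. First I would record the two standard facts that turn a QF bimodule into a candidate QF extension: since ${}_BM$ is finitely generated projective, there is a Morita-type setup, and $M$ becomes an $(E,A)$-bimodule and ${}^*M := (M_A)^*$ an $(A,E)$-bimodule. The $(E,B)$-bimodule ${}_E E_B$ decomposes via the dual basis: one has $E = \END M_A \cong M \otimes_B (M_A)^* $ as $(E,B)$-bimodules when ${}_BM$ is finitely generated projective, and more to the point ${}_B E = {}_B M \otimes_B {}^*M$ — but the key point is to identify ${}_B E$ as a summand of a direct sum of copies of ${}_BM$, which follows because ${}_E M$ is finitely generated projective (again by the dual basis for ${}_BM$), so $E \| M^{(n)}$ as $(E,B)$-bimodules, giving that ${}_B E$ is finitely generated projective since ${}_BM$ is.

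Next I would compute the left $B$-dual $({}_B E)^*$. Using $E \sim $-type identifications, $({}_B E)^*$ should be expressible in terms of ${}^*M \otimes_B {}^{**}M \cong {}^*M \otimes_B M$ (finite generation and projectivity of ${}_BM$ lets us drop the double dual), i.e. one gets an $(B,E)$-bimodule built from $(M_A)^*$ and $M$. The crucial use of the QF bimodule hypothesis is the similarity $(_BM)^* \sim (M_A)^*$ as $(A,B)$-bimodules. Applying the functor $M \otimes_A -$ (which preserves finite direct sums, hence similarity, as noted in the excerpt) to this similarity of $(A,B)$-bimodules yields $M \otimes_A (_BM)^* \sim M \otimes_A (M_A)^*$ as $(E,B)$-bimodules. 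Since $M \otimes_A (M_A)^* \cong E$ as $(E,B)$-bimodules (dual basis for $M_A$, which is finitely generated projective), and $M \otimes_A (_BM)^* \cong \Hom_B(\,{}^*M, \,?)$ — more precisely I would identify $M\otimes_A ({}_BM)^*$ with $({}_B E)^*$ as $(E,B)$-bimodules, using the finitely-generated-projective hypotheses to move the duals around. Combining, $E \sim ({}_B E)^*$ as $(E,B)$-bimodules, which together with ${}_B E$ finitely generated projective is exactly the statement that $\lambda: B \to E$ is a QF extension, by the characterization recalled in the excerpt.

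So the skeleton is: (i) ${}_B E$ finitely generated projective, from ${}_BM$ finitely generated projective plus ${}_E M$ finitely generated projective; (ii) the natural isomorphisms $E \cong M \otimes_A (M_A)^*$ and $({}_B E)^* \cong M \otimes_A ({}_BM)^*$ as $(E,B)$-bimodules; (iii) apply $M\otimes_A -$ to the defining similarity $({}_BM)^* \sim (M_A)^*$ and transport through (ii) to get $E \sim ({}_B E)^*$; (iv) invoke the equivalence ``QF extension $\iff$ ${}_BA$ finitely generated projective and $A \sim ({}_BA)^*$'' with $A$ replaced by $E$ and $B$ unchanged. The main obstacle I anticipate is step (ii): pinning down the $(E,B)$-bimodule isomorphism $({}_B E)^* \cong M \otimes_A ({}_BM)^*$ correctly — one must be careful which dual is taken on which side and verify the bimodule actions match (the right $B$-action on $({}_B E)^*$ coming from right multiplication of $B$ on $E$ via $\lambda$, versus the right $B$-action on $M\otimes_A ({}_BM)^*$ coming from the $(A,B)$-bimodule structure of $({}_BM)^*$), which is a routine but delicate Hom-tensor bookkeeping that uses finite generation and projectivity of both ${}_BM$ and $M_A$. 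Everything else is formal manipulation with similarity and functors preserving finite direct sums.
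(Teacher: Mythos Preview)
Your overall strategy is correct and is a mirror image of the paper's argument. The paper works on the other side: it shows $E_B$ is finitely generated projective and then computes, as $(B,E)$-bimodules,
\[
(E_B)^* \;\cong\; \Hom_{-B}\bigl(M \otimes_A (M_A)^*,\, B\bigr) \;\cong\; \Hom_{-A}\bigl(M,\, ((M_A)^*_B)^*\bigr) \;\sim\; \Hom_{-A}\bigl(M,\, (({}_BM)^*_B)^*\bigr) \;\cong\; \Hom_{-A}(M,M) = E,
\]
using the QF similarity inside the Hom and reflexivity of ${}_BM$. Your route instead identifies $E \cong M \otimes_A (M_A)^*$ and $({}_BE)^* \cong M \otimes_A ({}_BM)^*$ as $(E,B)$-bimodules and applies $M \otimes_A -$ directly to the similarity $({}_BM)^* \sim (M_A)^*$. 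Both use exactly the same ingredients (the dual-basis isomorphism for $E$, reflexivity of a finite projective module, and one application of the QF similarity); yours packages the computation via tensor, the paper via Hom.

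Two local corrections to step~(i). First, ``$E \cong M \otimes_B (M_A)^*$'' should read $\otimes_A$. Second, the claim that ${}_EM$ is finitely generated projective is not true in general (that would need $M_A$ to be a generator) and in any case ``$E \,\|\, M^{(n)}$ as $(E,B)$-bimodules'' does not typecheck, since $M$ has no right $B$-action. The correct argument for ${}_BE$ finite projective is: ${}_A(M_A)^*$ is finite projective because $M_A$ is, so $(M_A)^* \,\|\, A^{n}$ as left $A$-modules, whence ${}_BE \cong {}_BM \otimes_A (M_A)^* \,\|\, ({}_BM)^{n}$, which is finite projective since ${}_BM$ is. Note that this step does \emph{not} use the QF hypothesis, whereas the paper's argument for $E_B$ finite projective does (it goes through $(M_A)^*_B \,\|\, ({}_BM)^*_B$); so your side of the mirror is slightly cleaner here. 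Your step~(ii) identification $({}_BE)^* \cong M \otimes_A ({}_BM)^*$ is indeed the delicate point, but it follows from the same Hom--tensor adjunction and reflexivity that the paper uses, so there is no real obstacle.
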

\begin{proof}
Since ${}_BM$ is finite projective, so is ${(_BM)^*}_B$, and
${(M_A)^*}_B$ from the hypothesis that $(M_A)^* \| (_BM)^*$ as right
$B$-modules. Since $M_A$ is finite projective and $E \cong M
\otimes_A (M_A)^*$ (also as $(E, B)$-bimodules), it follows that
$E_B$ is finite projective.

It remains to show that $(E_B)^* \sim E$ as natural $(B,
E)$-bimodules. We compute with the natural $(B,E)$-bimodule
structures:
\begin{eqnarray*}
(E_B)^* & \cong & \Hom_{-B} (M \otimes_A {(M_A)^*},\, B) \\
        & \cong & \Hom_{-A} (M,\, \Hom_{-B}((M_A)^*, B)) \\
         & \sim & \Hom_{-A} (M,\, \Hom_{-B}((_BM)^*, B)) \\
         & \cong & E
\end{eqnarray*}
since $({(_BM)^*}_B)^* \cong M$ follows for the reflexive module
${}_BM$.
\end{proof}
\end{punto}
\begin{punto}{\bf On depth two extensions.}
Recall that a ring extension $B \rightarrow A$ is \textit{right depth two}, or right D2, if the natural $(A,B)$-bimodules $A \otimes_B A$ and $A$ itself are
similar.  A \textit{left depth two} extension is defined oppositely:  $A / B$ is left D2
if $A^{\rm op} / B^{\rm op}$ is right D2. Also dual theorems for left D2 extensions may be deduced in this way.

As an example, an H-separable extension $A / B$ is (left and right) D2,
since  its defining condition is that $A$ and $A \otimes_B A$
be similar as natural $(A,A)$-bimodules \cite{hirata}. Other examples
are Hopf-Galois extensions, pseudo-Galois extensions, and faithfully flat
projective algebras (The definition of depth two
is sometimes extended
in an straightforward way to include examples of infinite index
subalgebras such as Hopf-Galois extensions with infinite dimensional
Hopf algebra). A Hopf subalgebra of a semisimple Hopf $\C$-algebra
is depth two precisely when it is normal.  
Just when a one-sided depth two ring extension is automatically two-sided is an interesting question of chirality \cite{kadison-coD2} known to be the case also for Frobenius extensions: below we show that more generally QF extensions satisfy this property.

We give first a proposition with several characterizations of
finite projective right D2 extensions in terms of similar bimodules.

\begin{proposition}
\label{prop-a}
  Suppose $A / B$ is a ring extension such that the natural modules
${}_BA$ and $A_B$ are finite projective.  Then the following
are equivalent and characterize right D2 extension:
\begin{enumerate}[(a)]
\item $A \sim A \otimes_B A$ as $(A,B)$-bimodules;
\item $A \sim \END {}_BA$ as $(B,A)$-bimodules;
\item $(A_B)^* \sim (A_B)^* \otimes_B (A_B)^*$ as $(B,A)$-bimodules.
\end{enumerate}
\end{proposition}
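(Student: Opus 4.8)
Statement (a) is, by definition, the assertion that $A/B$ is right D2, so it remains to prove (a)$\Leftrightarrow$(b) and (a)$\Leftrightarrow$(c). The three similarities will be carried into one another by additive functors preserving finite direct sums — which preserve ``$\sim$'', as recalled above — supplemented, for the $B$-duality functors, by the fact that on finitely generated projective modules a duality preserves \emph{and reflects} similarity. Under the hypothesis that ${}_BA$ and $A_B$ are finitely generated projective, every bimodule occurring in (a)--(c) is finitely generated projective on both sides over $B$: each is obtained from $A$ by the operations $-\otimes_B-$, $(-)^{*}$, $\END({}_B-)$ and passage to direct summands, and these preserve the relevant one-sided $B$-finiteness and $B$-projectivity. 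I would use freely the natural isomorphisms $\END {}_BA\cong(_BA)^{*}\otimes_B A$ and $\END (A_B)\cong A\otimes_B(A_B)^{*}$ as $(A,A)$-bimodules, the reflexivity isomorphisms $(_BA)^{**}\cong A$ and $(A_B)^{**}\cong A$, and the Hom--tensor identities $\Hom_{-B}(A\otimes_B A,B)\cong(A_B)^{*}\otimes_B(A_B)^{*}$ as $(B,A)$-bimodules and $\Hom_{B-}((A_B)^{*},A)\cong A\otimes_B A$ as $(A,A)$-bimodules, all of which follow from Hom--tensor adjunction together with the one-sided finite projectivity of ${}_BA$ and $A_B$.

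For (a)$\Leftrightarrow$(c) I would apply the right $B$-dual $(-)^{\vee}=\Hom_{-B}(-,B)$, a duality from the $(A,B)$-bimodules finitely generated projective over $B$ on the right onto the $(B,A)$-bimodules finitely generated projective over $B$ on the left. Since $(A\otimes_B A)^{\vee}\cong(A_B)^{*}\otimes_B(A_B)^{*}$ and $A^{\vee}=(A_B)^{*}$, applying $(-)^{\vee}$ to the similarity $A\sim A\otimes_B A$ of (a) yields the similarity $(A_B)^{*}\sim(A_B)^{*}\otimes_B(A_B)^{*}$ of (c); conversely, applying the inverse duality $\Hom_{B-}(-,B)$ to (c) and using $\Hom_{B-}((A_B)^{*},B)=(A_B)^{**}\cong A$ together with $\Hom_{B-}\!\big((A_B)^{*}\otimes_B(A_B)^{*},B\big)\cong\Hom_{B-}((A_B)^{*},A)\cong A\otimes_B A$ recovers (a). Hence (a)$\Leftrightarrow$(c).

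For (a)$\Leftrightarrow$(b) I would first record that $A$ is always a direct summand of $\END {}_BA$ as a $(B,A)$-bimodule: evaluation $f\mapsto f(1_A)$ is a $(B,A)$-bimodule retraction, with section the right regular representation $a\mapsto(x\mapsto xa)$ — exactly dually to the standing fact that $A$ is a direct summand of $A\otimes_B A$ as an $(A,B)$-bimodule via $a\mapsto a\otimes 1$ and multiplication. Thus (a) reduces to ``$A\otimes_B A\| A^{(n)}$ for some $n$'' and (b) to ``$\END {}_BA\| A^{(m)}$ for some $m$''. One then passes between these two divisibilities by means of the isomorphism $\END {}_BA\cong(_BA)^{*}\otimes_B A$ and the $B$-dualities: applying $(-)^{\vee}$ to a divisibility $A\otimes_B A\| A^{(n)}$ produces — as in the (a)$\Leftrightarrow$(c) step — a divisibility of $(A_B)^{*}$-tensor powers, which via reflexivity of ${}_BA$ and $A_B$ and the Hom--tensor identities translates into $\END {}_BA\| A^{(m)}$, and conversely (note that $\END {}_BA\| A^{(m)}$ in particular exhibits $\END {}_BA$ as finitely generated projective over $B$ on the left, so the dual functors do apply to it).

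The argument is entirely formal; the part requiring genuine care — and the main obstacle — is the bookkeeping of bimodule structures through the Hom--tensor adjunctions: tracking the $(A,B)$ versus $(B,A)$ handedness at each step, and verifying that the natural maps exhibited (in particular those computing $\Hom_{-B}(A\otimes_B A,B)$, $\Hom_{B-}((A_B)^{*},A)$, and $\END {}_BA$) are genuine bimodule isomorphisms and not merely $k$-linear ones.
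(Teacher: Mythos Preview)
Your treatment of (a)$\Leftrightarrow$(c) is correct and essentially coincides with the paper's: apply the right $B$-dual $\Hom_{-B}(-,B)$ in one direction and the left $B$-dual $\Hom_{B-}(-,B)$ in the other, using the identification $(A\otimes_B A_B)^*\cong (A_B)^*\otimes_B (A_B)^*$ together with reflexivity of $A_B$.

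Your argument for (a)$\Leftrightarrow$(b), however, has a genuine gap. You propose to route through the $(A_B)^*$-picture: from $A\otimes_B A\,\|\,A^{(n)}$ you pass via $\Hom_{-B}(-,B)$ to $(A_B)^*\otimes_B(A_B)^*\,\|\,((A_B)^*)^{(n)}$, and then assert that ``reflexivity and Hom--tensor identities'' translate this into $\END{}_BA\,\|\,A^{(m)}$. But no such translation is available: $(A_B)^*\otimes_B(A_B)^*$ is not isomorphic to $\END{}_BA\cong({}_BA)^*\otimes_B A$, nor is $(A_B)^*$ isomorphic to $A$, as $(B,A)$-bimodules --- such identifications would require a Frobenius-type hypothesis you do not have. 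The $B$-dualities you invoke connect (a) with (c), not with (b); the isomorphism $\END{}_BA\cong({}_BA)^*\otimes_B A$ involves the \emph{left} $B$-dual of $A$, which is an entirely different object from $(A_B)^*$.

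The functor that actually carries (a) to (b) is the left-$A$-dual $\Hom_{A-}(-,A)$, viewed as a contravariant functor from $(A,B)$-bimodules to $(B,A)$-bimodules. On bimodules that are finitely generated projective as left $A$-modules it is a duality, and both $A$ and $A\otimes_B A$ qualify (the latter because ${}_BA$ is finite projective, so ${}_A(A\otimes_B A)\cong A\otimes_B({}_BA)$ is finite projective). One has $\Hom_{A-}(A,A)\cong A$ and, by Hom--tensor adjunction, $\Hom_{A-}(A\otimes_B A,A)\cong\Hom_{B-}(A,A)=\END{}_BA$ as $(B,A)$-bimodules; this yields (a)$\Leftrightarrow$(b) directly. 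The paper does not spell this out either, instead citing \cite[Proposition~3.8]{kadison-cen} for the equivalence.
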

\begin{proof}
Conditions (a) $\Leftrightarrow$ (b) follow from \cite[Proposition\ 3.8]{kadison-cen}, which makes only use of ${}_BA$ finite projective
in proving (b) $\Rightarrow$ (a)  (Note that
any ring extension will satisfy $A \| \END {}_BA$ as $(B,A)$-bimodules
since right multiplication $\rho: A \rightarrow \END {}_BA$ is a split
monic of $(B,A)$-bimodules, split by $f \mapsto f(1)$. The natural $(A,A)$-bimodule structure on $\END {}_BA$ is
given by $x \cdot f \cdot y = \rho_y \circ f \circ \rho_x$
for $x,y \in A$).

\noindent
Condition (a) $\Rightarrow$ (c) using $A_B$ is finite projective:
we note the isomorphism of natural $(B,A)$-bimodules,
\begin{equation}
\label{eq: lemma}
(A_B)^* \otimes_B (A_B)^* \cong (A \otimes_B A_B)^*
\end{equation}
via $\alpha \otimes_B \beta \longmapsto ( x \otimes_B y \mapsto \alpha(\beta(x)y))$,
which follows from  applications of \cite[20.11]{A-F} and its dual.
In this case, condition (c) follows from applying to (a) the functor
$(-_B)^*$ from the category of $(A,B)$-bimodules into the category
of $(B,A)$-bimodules.

\noindent
The reverse implication, conditions (c) $\Rightarrow$ (a)
follows from the isomorphism~(\ref{eq: lemma}), together with
noting that $A_B$ and therefore $A \otimes_B A_B$ are finite projective,
and applying the functor $(_B-)^*$ to condition (c).
\end{proof}
\begin{remark}
The significance of the proposition is that the notion
of depth two may be extended to functors as follows.  Suppose $F: \mathcal{C}
\rightarrow \mathcal{D}$ is a functor between abelian categories with
left or right adjoint $G: \mathcal{D} \rightarrow \mathcal{C}$.
Say that $F$ is \textit{depth two} if $FGF \sim F$, where similarity of functors is defined in a straightforward generalization of the two notions of similarity used above. If $F$ is one of the three functors of restriction, induction or coinduction between module categories
over rings $B$ or $A$, then by the proposition any choice of $F$ recovers the notion of depth two for (finite projective) ring extensions.
For example, coinduction  is tensoring by the dual in case
the ring extension is finite projective.
\end{remark}

We next show that QF extensions are two-sided depth two if one-sided.
We first establish a lemma about QF extensions, which is well-known
for Frobenius extensions (where isomorphism replaces similarity in
the conclusion).

\begin{lemma}
\label{lem-a}
Let  $A / B$ be a QF ring extension.  Then
the natural $(A,A)$-bimodules $\END {}_BA$,
$A \otimes_B A$ and $\END A_B$ are similar.
Moreover, $A \otimes_B A \sim E$ as natural $(E,A)$-bimodules
where $E = \END A_B$.
\end{lemma}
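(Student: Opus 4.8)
The plan is to exploit the QF-extension hypothesis $A \sim (_BA)^*$ as $(A,B)$-bimodules, then transport this similarity through a chain of Hom/tensor isomorphisms, all the while keeping careful track of the natural bimodule structures. Since $_BA$ and $A_B$ are both finite projective (a QF extension is two-sided by the discussion preceding the lemma), we have the standard isomorphisms $\END {}_BA \cong A \otimes_B (_BA)^*$ as $(A,A)$-bimodules and $\END A_B \cong (A_B)^* \otimes_B A$ as $(A,A)$-bimodules, as well as the reflexivity isomorphisms ${}_B(A^*)^* \cong {}_BA$ and $(A^*_B)^* \cong A_B$. First I would apply the functor $A \otimes_B -$ to the similarity $(_BA)^* \sim A$ of $(B,B)$-bimodules (obtained by forgetting structure on either side), which preserves finite direct sums, to get $\END {}_BA \cong A \otimes_B (_BA)^* \sim A \otimes_B A$ as $(A,A)$-bimodules. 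Symmetrically, applying $- \otimes_B A$ to the similarity $A \sim (A_B)^*$ of $(B,B)$-bimodules — this last comes from applying $(-_B)^*$ or $({}_B-)^*$ to $A\sim(_BA)^*$ together with reflexivity — yields $A \otimes_B A \sim (A_B)^* \otimes_B A \cong \END A_B$ as $(A,A)$-bimodules. By transitivity of $\sim$, the three bimodules $\END {}_BA$, $A \otimes_B A$, $\END A_B$ are all similar as $(A,A)$-bimodules, giving the first assertion.

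For the "moreover" clause, I would refine the second computation, keeping the left $E = \END A_B$-module structure rather than collapsing it to the left $A$-action. Here one uses that $A \otimes_B A \cong \END A_B \otimes_E (A \otimes_B A)$ is not quite right; instead the relevant fact is the natural $(E,A)$-bimodule isomorphism $E \cong (A_B)^* \otimes_B A$, where $E$ acts on the left via its action on the $(A_B)^*$ factor, and $A$ on the right via its action on the second $A$. Then applying $(A_B)^* \otimes_B -$ (a finite-direct-sum-preserving functor on $(B,A)$-bimodules, landing in $(E,A)$-bimodules via the $E$-structure on $(A_B)^*$) to the similarity $A \sim (A_B)^*$ of $(B,A)$-bimodules gives $A \otimes_B A \cong$ hmm — this needs the similarity $A \sim (A_B)^*$ on the \emph{left}, i.e. as $(B,A)$-bimodules. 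The cleaner route: apply $- \otimes_B A$ to $A \sim (A_B)^*$ as $(B,B)$-bimodules while remembering that $(A_B)^*$ carries a left $E$-module structure and $A \sim (A_B)^*$ can be taken as $(E,B)$-bimodules (split the defining surjection $(A_B)^{*(n)} \to A$ $E$-linearly on the left, since $(A_B)^* \| (A_B)^*$ trivially and the QF similarity $A\|(A_B)^*$ is $(B,A)$-bilinear, hence also $(E,B)$... ) — this is the delicate point. Granting that $A \sim (A_B)^*$ as $(E,B)$-bimodules, tensoring with $- \otimes_B A$ gives $A \otimes_B A \sim (A_B)^* \otimes_B A \cong E$ as $(E,A)$-bimodules.

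The main obstacle is precisely this bookkeeping of bimodule structures in the last step: one must verify that the QF similarity $A \| (A_B)^*$, originally $(B,A)$-bilinear, upgrades to an $(E,B)$-bilinear (equivalently, after tensoring, $(E,A)$-bilinear) similarity. This follows because the $E$-action on $(A_B)^*$ and on $A$ (the latter through $\rho: A \to E$, or rather through the left regular action of $A \subseteq E$ appropriately) is compatible with the given splittings — the point being that the complement $P$ in $(A_B)^{*(n)} \cong A \oplus P$ is automatically an $E$-submodule because the splitting maps, being determined by right $A$-linearity on a finite projective module, commute with the left $E$-action by the standard "double centralizer"-type argument, or more simply because $\END A_B \otimes_B A$ and $(A_B)^*\otimes_B A$ carry matching $E$-structures via the isomorphism~\eqref{eq: lemma}-type identifications. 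I would spell this out by writing $E \otimes_B A$-module maps explicitly and checking naturality, but the conceptual content is just that $- \otimes_B A$ and $(A_B)^* \otimes_B -$ are additive functors carrying the one-sided QF similarity to the desired two-sided one.
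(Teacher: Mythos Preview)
Your overall strategy---transport the QF similarity through the standard $\Hom$/tensor identifications---is the paper's strategy too, but the execution has two real problems, one fixable and one a genuine gap.

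First, your endomorphism-ring isomorphisms have the tensor factors reversed. For $A_B$ finite projective one has $E = \END A_B \cong A \otimes_B (A_B)^*$ (via $a\otimes f \mapsto (x\mapsto af(x))$), not $(A_B)^*\otimes_B A$; dually $\END {}_BA \cong (_BA)^*\otimes_B A$. This matters: once you forget down to $(B,B)$-bimodules and then apply $A\otimes_B-$, you only recover an $(A,B)$-bimodule similarity, not the $(A,A)$-similarity you claim. The fix is simply to apply $-\otimes_B A$ directly to the $(A,B)$-similarity $A\sim(_BA)^*$ (no forgetting), obtaining $A\otimes_B A \sim (_BA)^*\otimes_B A \cong \END {}_BA$ as $(A,A)$-bimodules.

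Second, and this is the real gap, your ``moreover'' argument hinges on equipping $(A_B)^*$ with a \emph{left} $E$-module structure and then upgrading the QF similarity $A\sim(A_B)^*$ to one of $(E,B)$-bimodules. But $(A_B)^* = \Hom_{-B}(A,B)$ is naturally only a \emph{right} $E$-module (via $f\cdot e = f\circ e$); there is no natural left $E$-action on it, so ``$A\sim(A_B)^*$ as $(E,B)$-bimodules'' is not even well-posed, and your double-centralizer handwave cannot rescue it. The paper sidesteps this entirely: it is $A$ itself that carries the tautological left $E$-module structure, so one applies the functor $A\otimes_B -$ (landing in $(E,A)$-bimodules) to the $(B,A)$-similarity $A\sim (A_B)^*$, obtaining immediately $A\otimes_B A \sim A\otimes_B (A_B)^* \cong E$ as $(E,A)$-bimodules. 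No upgrading of the QF splitting is needed; the $E$-structure is supplied by the fixed left tensor factor, not by the varying right one.
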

\begin{proof}
Assume that $A\|B$ is a QF ring extension. Then $A \sim (A_B)^*$ as $(B,A)$-bimodules.  Apply to this the functor $A \otimes_B -$ into the category of $(E,A)$-bimodules, obtaining $A \otimes_B A \sim \END A_B$, since it follows from $A_B$ finite projective that $\END A_B \cong A \otimes_B (A_B)^*$.  The rest of the proof is in the same vein.
\end{proof}
\begin{proposition}\label{converse}
Let $A / B$ be a QF ring extension.  Then $A / B$ is left D2
$\Leftrightarrow$ extension $A / B$ is right D2.
\end{proposition}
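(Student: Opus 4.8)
The plan is to exploit the symmetry built into the QF condition together with Lemma~\ref{lem-a}. Recall that by definition a QF extension $A/B$ is both left and right QF, i.e., $_BA$ and $A_B$ are both finite projective and $A \sim (_BA)^*$ as $(A,B)$-bimodules (equivalently, $A \sim (A_B)^*$ as $(B,A)$-bimodules, since the two one-sided duals of $_BA_B$ are similar by the QF-bimodule characterization of QF extensions). The key observation is that the left/right D2 conditions are, by Proposition~\ref{prop-a}, expressible purely in terms of similarities of bimodules that Lemma~\ref{lem-a} already ties together.

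First I would spell out what "left D2" means concretely. Since $A/B$ is left D2 iff $A^{\mathrm{op}}/B^{\mathrm{op}}$ is right D2, and since for a QF extension the hypotheses of Proposition~\ref{prop-a} (namely $_BA$ and $A_B$ finite projective) hold for $A^{\mathrm{op}}/B^{\mathrm{op}}$ as well, the opposite-side version of Proposition~\ref{prop-a}(b) gives: $A/B$ is left D2 $\iff$ $A \sim \END A_B$ as $(A,B)$-bimodules. Dually, $A/B$ is right D2 $\iff$ $A \sim \END {}_BA$ as $(B,A)$-bimodules, which is Proposition~\ref{prop-a}(b) read directly. So the whole proposition reduces to proving the equivalence of the two similarities
\[
A \sim \END {}_BA \quad\text{(as $(B,A)$-bimodules)} \qquad\Longleftrightarrow\qquad A \sim \END A_B \quad\text{(as $(A,B)$-bimodules).}
\]

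Now I would bring in Lemma~\ref{lem-a}, which asserts that the natural $(A,A)$-bimodules $\END {}_BA$, $A\otimes_B A$, and $\END A_B$ are all similar to one another. The idea is to transport the one-sided D2 similarity into the world of $(A,A)$-bimodules where this three-way similarity lives, and then transport it back out the other side. Concretely: if $A/B$ is right D2, then $A \sim A\otimes_B A$ as $(A,B)$-bimodules (Proposition~\ref{prop-a}(a)); but by Lemma~\ref{lem-a} we have $A\otimes_B A \sim \END A_B$ even as $(A,A)$-bimodules, hence a fortiori as $(A,B)$-bimodules, so $A \sim \END A_B$ as $(A,B)$-bimodules, which is the opposite-side (b) and thus left D2. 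The reverse implication is the mirror image, using $A \otimes_B A \sim \END {}_B A$ from Lemma~\ref{lem-a} and Proposition~\ref{prop-a}(a) applied to $A^{\mathrm{op}}/B^{\mathrm{op}}$. The main thing to be careful about — the one genuine obstacle — is bookkeeping the bimodule structures: one must check that the similarities furnished by Lemma~\ref{lem-a} really do restrict correctly to the one-sided ($(A,B)$- or $(B,A)$-) bimodule structures demanded by Proposition~\ref{prop-a}, and that no chirality is being silently swapped when passing between $\END {}_BA$ and $\END A_B$. Once the structures are matched, the argument is a short chain of similarities; the heavy lifting has already been done in Lemma~\ref{lem-a} and Proposition~\ref{prop-a}.
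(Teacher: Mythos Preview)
Your proposal is correct and uses exactly the same two ingredients as the paper---Proposition~\ref{prop-a} and Lemma~\ref{lem-a}---chained into a short sequence of bimodule similarities. The only cosmetic difference is the direction of travel: the paper starts from characterization~(b) of right D2 ($A \sim \END {}_BA$ as $(B,A)$-bimodules), uses Lemma~\ref{lem-a} to replace $\END {}_BA$ by $A\otimes_B A$, and lands directly on the \emph{definition} of left D2 ($A \sim A\otimes_B A$ as $(B,A)$-bimodules), thereby avoiding any appeal to the opposite-side version of Proposition~\ref{prop-a}; you instead start from characterization~(a) on the $(A,B)$-side and finish on the opposite-side characterization~(b). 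Both routes are equally short and valid.
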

\begin{proof}
Suppose QF extension $A / B$ is right D2, so that $A \sim \END {}_BA$ as $(B,A)$-bimodules by Proposition~\ref{prop-a}. By Lemma~\ref{lem-a}, $A \sim A \otimes_B A$
as $(B,A)$-bimodules, whence $A / B$ is left D2.  The converse follows
from dualizing this argument.
\end{proof}

With a bit more care, it may be shown that if a left QF extension
is right D2, then it is left D2. Next we show a converse to the endomorphism
ring theorem for the property D2. Let $E$  denote $\END A_B$ of
a ring extension $A / B$.

\begin{theorem}
Let $A / B$ be a  QF ring extension and
$A$ is a generator as a right $B$-module.  If $A \stackrel{\lambda}{\longrightarrow} E$ is D2, then $A / B$ is D2.
\end{theorem}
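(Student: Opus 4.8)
The plan is to push the depth‑two property of $\lambda$ down to $A/B$ by combining the endomorphism description of the tensor‑square (Lemma~\ref{lem-a}), a Morita equivalence furnished by the generator hypothesis, and the left–right symmetry of depth two for QF extensions (Proposition~\ref{converse}). First I would set up the reductions. As $A/B$ is QF, both ${}_BA$ and $A_B$ are finitely generated projective, and with $A_B$ a generator this makes $A_B$ a progenerator. The natural bimodule ${}_AA_B$ is a QF bimodule, so by the endomorphism ring theorem for QF bimodules (the Proposition above) $\lambda\colon A\to E$ is a QF extension; hence by Proposition~\ref{converse} left and right D2 coincide both for $A/B$ and for $\lambda\colon A\to E$. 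In particular the hypothesis gives $E\otimes_A E\sim E$ as $(E,A)$-bimodules, and it suffices to prove that $A/B$ is \emph{left} D2, i.e.\ that $A\otimes_B A\sim A$ as $(B,A)$-bimodules (the left $B$-action on the first tensorand, the right $A$-action on the second).

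The first real step is to move the tensor‑square down one floor. By Lemma~\ref{lem-a} applied to $A/B$, one has $A\otimes_B A\sim E$ as $(E,A)$-bimodules. The additive functor $E\otimes_A(-)$, with $E$ a right $A$-module through $\lambda$, preserves similarity; and since the (restricted) left $E$-action on $A\otimes_B A$ is left multiplication on the first factor, there is a natural isomorphism of $(E,A)$-bimodules $E\otimes_A(A\otimes_B A)\cong E\otimes_B A$, obtained by applying $E\otimes_A A\cong E$ to the first two tensor factors ($E$ being regarded as a right $B$-module through $\lambda$). Hence $E\otimes_B A\sim E\otimes_A E$, and with the D2 hypothesis $E\otimes_B A\sim E$ as $(E,A)$-bimodules.

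Next I would descend via Morita theory. Since $A_B$ is a progenerator, $A$ (as an $(E,B)$-bimodule) and $\overline{A}:=\Hom_B(A_B,B_B)=(A_B)^*$ (as a $(B,E)$-bimodule) implement a Morita equivalence between $B$ and $E$, with $A\otimes_B\overline{A}\cong E$ and $\overline{A}\otimes_E A\cong B$. Applying the induced category equivalence ${}_E\mathcal{M}_A\to{}_B\mathcal{M}_A$, $X\mapsto\overline{A}\otimes_E X$, to $E\otimes_B A\sim E$, and using the isomorphisms $\overline{A}\otimes_E(E\otimes_B A)\cong\overline{A}\otimes_B A$ and $\overline{A}\otimes_E E\cong\overline{A}$, one gets $\overline{A}\otimes_B A\sim\overline{A}$ as $(B,A)$-bimodules. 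Now the QF hypothesis for $A/B$ gives $\overline{A}=(A_B)^*\sim A$ as $(B,A)$-bimodules, hence also as $(B,B)$-bimodules by restriction; replacing $\overline{A}$ by $A$ at both ends — for the left occurrence by tensoring the $(B,B)$-similarity on the right with ${}_BA_A$ — yields $A\otimes_B A\sim A$ as $(B,A)$-bimodules. Therefore $A/B$ is left D2, and by Proposition~\ref{converse} it is D2.

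I expect the main obstacle to be the bookkeeping of one- and two‑sided module structures through these tensor identities — especially keeping the natural left action of $\END A_B$ distinct from the action pulled back along $\lambda$, and checking that the identifications $E\otimes_A A\cong E$, $A\otimes_B\overline{A}\cong E$ and $\overline{A}\otimes_E A\cong B$ carry the intended bimodule structures. It is also worth stressing that the argument produces \emph{left} D2 of $A/B$ directly, so the QF hypothesis is genuinely used twice: once through the endomorphism ring theorem to know that $\lambda$ is QF, and once through Proposition~\ref{converse} to pass from left to two‑sided D2.
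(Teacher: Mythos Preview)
Your argument is correct and follows the same strategy as the paper's proof: combine Lemma~\ref{lem-a} with the D2 hypothesis on $E/A$, descend along the Morita functor $(A_B)^*\otimes_E-$ furnished by the progenerator $A_B$, and conclude left D2 for $A/B$, then invoke Proposition~\ref{converse}. The paper's version is marginally shorter---it substitutes $A\otimes_B A$ for $E$ on both sides of $E\otimes_A E\sim E$ to get $A\otimes_B A\otimes_B A\sim A\otimes_B A$ as $(E,A)$-bimodules, then applies $(A_B)^*\otimes_E-$ together with the Morita isomorphism $(A_B)^*\otimes_E A\cong B$ to obtain $A\otimes_B A\sim A$ directly, avoiding your final substitution $\overline{A}\sim A$.
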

\begin{proof}
Since $A_B$ is a progenerator, the rings $E$ and $B$ are Morita equivalent,
with context bimodules ${}_EA_B$ and ${}_B{(A_B)^*}_E$.
In particular, $(A_B)^* \otimes_E A \cong B$ as $(B,B)$-bimodules.
\noindent
Given the right D2 condition ${}_EE \otimes_A E_A \sim {}_EE_A$, we make
the substitution $E \sim A \otimes_B A$ as $(E,A)$-bimodules from
the lemma.  Thus, $A \otimes_B A \otimes_B A \sim A \otimes_B A$
as $(E, A)$-bimodules.  Now apply the functor $(A_B)^* \otimes_E -$
to obtain $A \otimes_B A \sim A$ as $(B,A)$-bimodules;
whence $A / B$ is left D2.  From Proposition \ref{converse}, $A / B$ is also
right D2.
\end{proof}
\end{punto}
\begin{punto}{\bf Depth three and more.} From the point of view of
finite depth and Galois correspondence, it is necessary to
generalize depth two to a tower of three rings $C \subseteq B
\subseteq A$, or more generally $C \rightarrow B \rightarrow A$
(denoting unital ring homomorphisms).  Recall the tower $A / B / C$
is \textit{right depth three}, or right D3, if $A \otimes_B A \sim A$ as
natural $(A,C)$-bimodules. Thus, with $B = C$, we recover the notion
of right D2 ring extension $A / B$.
\noindent
As an example of right D3 tower, let $A /
B / C$ be the group algebras of a tower of groups $G > H > K$ over
any commutative ground ring. Suppose the normal closure of $K$ in
$G$ is contained in $H$: $ K^G < H.$ Then $A / B / C$ is (left and
right) D3.  (Similarly to Proposition~\ref{converse}, we may show that a
QF tower of rings is two-sided D3 if one-sided.)

Depth three and more is originally an analytic notion for subfactors
using the basic construction.  Recall
 then from \cite{kadison-fd} that a ring extension
$A / B$ is \textit{right depth three}, or right D3 extension, if the tower of rings
$E / A/ B$ is right D3, where $E$ denotes $\END A_B$ and the default mapping
is as usual $A \stackrel{\lambda}{\longrightarrow} E$. For example, when $B$ and $A$ are the group algebras of a finite subgroup pair $H < G$, the
ring extension $A / B$ is right D3 if $H$ has a
normal subgroup complement in $G$.  
 As in subfactor
theory, there is an embedding theorem for  ring extensions that are
depth three into  ring extensions that are depth two; however,
we provide a purely algebraic proof using only the QF property of
ring extensions. Recall that a ring extension $A / B$ is a \textit{separable extension} if $A \otimes_B A$ contains a
(separability) element $e = e^1 \otimes_B e^2$ (possibly suppressing a summation over simple tensors) such that $e^1 e^2 = 1_A$ and
$ae = ea$ for all $a \in A$.

\begin{theorem*}
\label{th-conv}
Suppose $A / B$ is a separable QF-extension
and $E = \END A_B$.  Then $ A / B$ is
D3 $\Leftrightarrow$ the composite extension $E / B$ is D2.
\end{theorem*}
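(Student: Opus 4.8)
The plan is to translate both depth conditions into similarity statements for tensor powers of $E$ over $A$ and over $B$, using Lemma~\ref{lem-a} for the implication $(\Rightarrow)$ and a separability element of $A/B$ for $(\Leftarrow)$. Recall that ``$A/B$ is right D3'' means $E\otimes_A E\sim E$ as $(E,B)$-bimodules and ``$E/B$ is right D2'' means $E\otimes_B E\sim E$ as $(E,B)$-bimodules. First I would observe that $\lambda\colon A\to E$ is itself a QF extension: the endomorphism ring theorem for QF bimodules applied to the bimodule ${}_AA_B$ --- which is QF exactly because $A/B$ is --- gives that $A\to\END A_B=E$ is QF; hence $E/B$ is a QF extension and the tower $E/A/B$ is a QF tower. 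By Proposition~\ref{converse} and its analogue for QF towers it then suffices to prove that $A/B$ is right D3 if and only if $E/B$ is right D2.

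For $(\Rightarrow)$, assume $A/B$ is right D3. By Lemma~\ref{lem-a}, $A\otimes_B A\sim E$ as $(E,A)$-bimodules, hence as $(A,A)$-bimodules after restriction along $\lambda$. Applying the finite-direct-sum preserving functor $\Phi=E\otimes_A(-)\otimes_A E$ from $(A,A)$-bimodules to $(E,E)$-bimodules, and using that $\Phi(A\otimes_B A)\cong E\otimes_B E$ canonically while $\Phi(E)=E\otimes_A E\otimes_A E$, yields $E\otimes_B E\sim E\otimes_A E\otimes_A E$ as $(E,E)$-bimodules. On the other hand, restricting the hypothesis $E\otimes_A E\sim E$ to $(A,B)$-bimodules and applying $E\otimes_A(-)$ gives $E\otimes_A E\otimes_A E\sim E\otimes_A E\sim E$ as $(E,B)$-bimodules. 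Chaining these, $E\otimes_B E\sim E$ as $(E,B)$-bimodules, i.e.\ $E/B$ is right D2. Only the QF hypothesis enters here.

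For $(\Leftarrow)$, assume $E/B$ is right D2, so $E\otimes_B E\sim E$ as $(E,B)$-bimodules. Let $e=\sum_i a_i\otimes_B b_i$ be a separability element of $A/B$, so $\sum_i a_ib_i=1_A$ and $\sum_i aa_i\otimes_B b_i=\sum_i a_i\otimes_B b_ia$ for all $a\in A$. Writing $\lambda_a\in E$ for left multiplication by $a$, I claim the canonical surjection $q\colon E\otimes_B E\to E\otimes_A E$ of $(E,E)$-bimodules is split by
\[
s\colon E\otimes_A E\longrightarrow E\otimes_B E,\qquad
f\otimes_A g\longmapsto \sum_i(f\circ\lambda_{a_i})\otimes_B(\lambda_{b_i}\circ g),
\]
well-definedness over the factor $A$ being the defining identity of $e$ and $q\circ s=\mathrm{id}$ following from $\sum_i a_ib_i=1_A$. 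Thus $E\otimes_A E\|E\otimes_B E\sim E$, so $E\otimes_A E\|E$ as $(E,B)$-bimodules; conversely $E\|E\otimes_A E$ as $(E,B)$-bimodules, since $f\mapsto f\otimes_A 1_E$ is an $(E,B)$-bilinear section of the multiplication $E\otimes_A E\to E$ (its right $B$-linearity holding because $\varphi(B)\subseteq\lambda(A)$, so the right $B$-action passes through the tensor product over $A$). Hence $E\otimes_A E\sim E$ as $(E,B)$-bimodules, i.e.\ $A/B$ is right D3. Only separability enters here.

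The step I expect to cost the most effort is verifying that the separability section $s$ is well defined and $(E,E)$-bilinear. Beyond that, the recurring subtlety is to keep straight the two a priori different left $E$-actions in play --- the one pulled back along $\lambda$ and the evaluation action $f\cdot a=f(a)$ of $E=\END A_B$ on $A$, the latter being intrinsic to the $(E,A)$-bimodule $A\otimes_B A$ of Lemma~\ref{lem-a} --- and to check that each insertion or deletion of a tensor factor $A$ respects the natural $(E,B)$-bimodule structures, which works precisely because $B$ maps into the subring $\lambda(A)\subseteq E$.
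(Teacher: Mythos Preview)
Your proposal is correct and follows essentially the same route as the paper: for $(\Rightarrow)$ you use Lemma~\ref{lem-a} and the functor $E\otimes_A(-)\otimes_A E$ to pass from $A\otimes_B A\sim E$ to $E\otimes_B E\sim E\otimes_A E\otimes_A E$, then apply the D3 hypothesis twice, exactly as the paper does; for $(\Leftarrow)$ you split $E\otimes_B E\to E\otimes_A E$ by the separability element and combine with the multiplication splitting, again as in the paper. Your extra remarks (that $A\to E$ and hence $E/B$ are QF, and the explicit verification of the section $s$) are welcome elaborations but do not change the strategy.
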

\begin{proof}
($\Rightarrow$) This part of the proof does not require separability.
We apply the bimodule similarity for the QF
extension $A / B$,  between its endomorphism ring
and  its tensor-square,
${}_AE_A \sim  {}_A A \otimes_B A_A$.
Tensoring by ${}_E E \otimes_A - \otimes_A E_A$,
we obtain $E \otimes_A E \otimes_A E \sim E \otimes_B E  $
as natural $(E,A)$-bimodules.  Now
 tensor by ${}_EE \otimes_A -$ the right D3 condition $E \sim E \otimes_A E$
to obtain ${}_E E \otimes_A E_B \sim {}_EE \otimes_B E_B$.  Again applying $E \sim E \otimes_A E$, we obtain
 ${}_EE \otimes_B E_B \sim
{}_E E_B$.
Thus $E/ B$ is  right D2.  Since it is
a QF extension as well by Proposition \ref{converse},
it is also left D2.

($\Leftarrow$) This part of the proof does not require $A / B$ be
a QF extension. Since $A / B$ is a separable extension, the
natural $(E,E)$-epimorphism $E \otimes_B E \rightarrow E \otimes_A E$
splits via a mapping  $x \otimes_A y \mapsto xe^1 \otimes_B e^2y$ where $e =
e^1 \otimes_B e^2$ denotes a separability element.  Thus $E \otimes_A E$
divides $E \otimes_B E$ which is similar to $E$ as $(E, B)$-bimodules.
It follows that $E \otimes_A E$ divides $E$ as $(E,B)$-bimodules,
which is a sufficient condition for $A / B$ to be right D3 extension (since
$E$  divides $E \otimes_A E$ via multiplication).
\end{proof}

Recall that higher depth is defined by iterating the endomorphism ring
construction.  Thus a QF extension $A / B$ is \textit{depth n} if
$E_{n-2} / E_{n-3} / B$ is a D3 tower, where $E_1 = E$, $E_{0} = A$,
and $E_{-1} = B$, and $E_m$ is the right endomorphism ring of the extension
$E_{m-2} \stackrel{\lambda}{\longrightarrow} E_{m-1}$ defined
inductively from $m \geq 2$. It is not hard to establish by similar means
to those above that a depth $n$ extension is also depth $n+1$.  From
this and a tunneling lemma we may establish
(by small modifications to the arguments
in \cite[section~8]{kadison-fd}) an embedding theorem
for a depth $n$ QF extension $A / B$; that $E_m / B$ is D2
for a sufficiently large $m \geq n-2$.
\end{punto}

\end{document}